\documentclass[12pt]{article}

\usepackage{latexsym,amsfonts,amsmath,amsthm,amssymb, epsfig}
\usepackage{color}

% This package prints the labels in the margin
%\usepackage[notref,notcite]{showkeys}
%\usepackage{showkeys}
%\usepackage{comment}
%\usepackage{citeref}
%\usepackage{refcheck}
%\usepackage{subfigure}

\usepackage[vcentering,dvips]{geometry}
\geometry{papersize={195mm,270mm},total={140mm,220mm}}

\newtheorem{thm}{Theorem}
\newtheorem{lem}[thm]{Lemma}

\theoremstyle{remark}

\theoremstyle{definition}

\newcommand{\R}{\mathbb{R}}

\newcommand{\N}{\mathbb{N}}

\newcommand{\rd}{\,{\rm d}}
\newcommand{\bsx}{{\boldsymbol x}}
\newcommand{\bst}{{\boldsymbol t}}
\newcommand{\bszero}{{\boldsymbol 0}}

\newcommand{\cP}{\mathcal{P}}
\newcommand{\cA}{\mathcal{A}}

%\parindent=0pt 

%\frenchspacing

%FIX
%\newlength{\fixboxwidth}
%\setlength{\fixboxwidth}{\marginparwidth}
%\addtolength{\fixboxwidth}{-28pt}
%\newcommand{\fix}[1]{\marginpar{\fbox{\parbox{\fixboxwidth}
% {\raggedright\tiny #1}}}}

%  \date{}

\title{The $L_p$-discrepancy for finite $p>1$ suffers from the curse of dimensionality}  

%E 
\author{Erich Novak and  Friedrich Pillichshammer}
%\author{}
%E 
%\date{\today} 
\date{}

\begin{document}

\maketitle

\begin{abstract}
The $L_p$-discrepancy is a classical quantitative measure for the irregularity of distribution of an $N$-element point set in the $d$-dimensional unit cube. Its inverse for dimension $d$ and error threshold $\varepsilon \in (0,1)$ is the number of points in $[0,1)^d$ that is required such that the minimal normalized $L_p$-discrepancy is less or equal $\varepsilon$. It is well known, that the inverse of $L_2$-discrepancy grows exponentially fast with the dimension $d$, i.e., we have the curse of dimensionality, whereas the inverse of $L_{\infty}$-discrepancy depends exactly linearly on $d$. 
The behavior of inverse of $L_p$-discrepancy for general $p \not\in \{2,\infty\}$ was
an open problem since many years. 
Recently, the curse of dimensionality for the $L_p$-discrepancy was shown for an infinite sequence of values $p$ in $(1,2]$, but the general result seemed to be out of reach.

In the present paper we show that the $L_p$-discrepancy suffers from the curse 
of dimensionality for all $p$ in $(1,\infty)$ and only the case $p=1$ is still open.   

This result follows from a more general result that we show for the worst-case error 
of positive quadrature formulas for an anchored  Sobolev space
of once differentiable functions in each variable whose first 
mixed derivative has finite $L_q$-norm, where $q$ is the H\"older conjugate of~$p$.  
\end{abstract}

\centerline{\begin{minipage}[hc]{130mm}{
{\em Keywords:} Discrepancy, numerical integration, curse of dimensionality, tractability, quasi-Monte Carlo\\
{\em MSC 2010:} 11K38, 65C05, 65Y20}
\end{minipage}}

%E 
\section{Introduction and main result}\label{sec:intro}

For a set $\cP$ consisting of $N$ points $\bsx_1,\bsx_2,\ldots,\bsx_N$ in the $d$-dimensional unit-cube $[0,1)^d$ the local discrepancy function $\Delta_{\cP}:[0,1]^d \rightarrow \R$ is defined as $$\Delta_{\cP}(\bst)=\frac{|\{k \in \{1,2,\ldots,N\}\ : \ \bsx_k \in [\bszero,\bst)\}|}{N}-{\rm volume}([\bszero,\bst)),$$  
for $\bst=(t_1,t_2,\ldots,t_d)$ in $[0,1]^d$, where $[\bszero,\bst)=[0,t_1)\times [0,t_2)\times \ldots \times [0,t_d)$. For a parameter $p \in [1,\infty]$ the $L_p$-discrepancy of the point set $\cP$ is defined as the $L_p$-norm of the local discrepancy function $\Delta_{\cP}$, i.e., $$L_{p,N}(\cP):=\left(\int_{[0,1]^d} |\Delta_{\cP}(\bst)|^p \rd \bst\right)^{1/p} \quad \mbox{for $p \in [1,\infty)$,}$$ and $$L_{\infty,N}(\cP):=\sup_{\bst \in [0,1]^d} | \Delta_{\cP}(\bst)| \quad  \mbox{for $p=\infty$.}$$ Traditionally, the $L_{\infty}$-discrepancy is called star-discrepancy and is denoted by $D_N^{\ast}(\cP)$ rather than $L_{\infty,N}(\cP)$. The study of $L_p$-discrepancy has its roots in the theory of uniform distribution modulo one; see \cite{BC,DT97,kuinie,mat} for detailed information. 
It has a close relation to numerical integration, see Section~\ref{sec:int}.

Since one is interested in point sets with $L_p$-discrepancy as low as possible it is obvious to study for $d,N \in \N$ the quantity $${\rm disc}_p(N,d):=\min_{\cP} L_{p,N}(\cP),$$ where the minimum is extended over all $N$-element point sets $\cP$ in $[0,1)^d$. This quantity is called the $N$-th minimal $L_p$-discrepancy in dimension $d$.

Traditionally, the $L_p$-discrepancy is studied from the point of view of a fixed dimension $d$ and one asks for the asymptotic behavior for increasing sample sizes $N$. 
The celebrated result of Roth~\cite{Roth} is the most famous result %E  s deleted  
in this direction and can be seen as the initial point of discrepancy theory. For $p \in (1,\infty)$ it is known that for every dimension $d \in \N$ there exist positive reals $c_{d,p},C_{d,p}$ such that for every $N \ge 2$ it holds true that $$c_{d,p} \frac{(\log N)^{\frac{d-1}{2}}}{N} \le {\rm disc}_p(N,d) \le C_{d,p} \frac{(\log N)^{\frac{d-1}{2}}}{N}.$$ Similar results, but less accurate, are available also for $p\in \{1,\infty\}$. See the above references for further information. The currently best 
asymptotical lower bound in the $L_\infty$-case can be found in \cite{BLV}.  

All the classical bounds have a poor dependence on the dimension $d$. 
For large $d$ these bounds are only meaningful in an asymptotic sense 
(for very large $N$) and do not give any information about the 
discrepancy in the pre-asymptotic regime (see, e.g., \cite{NW10} or \cite[Section~1.7]{DKP} for discussions). Nowadays, motivated from applications of point sets with low discrepancy for numerical integration, there is dire need of information about the dependence of discrepancy on the dimension. 

This problem is studied with the help of the so-called inverse of $L_p$-discrepancy (or, in a more general context, the information complexity; see Section~\ref{sec:int}). This concept compares the minimal $L_p$-discrepancy with the initial discrepancy ${\rm disc}_p(0,d)$, which is the $L_p$-discrepancy of the empty point set, and asks for the minimal number $N$ of nodes that is necessary in order to achieve that the $N$-th minimal $L_p$-discrepancy is smaller than $\varepsilon$ times ${\rm disc}_p(0,d)$ for a threshold $\varepsilon \in (0,1)$. In other words, for $d \in \N$ and $\varepsilon \in (0,1)$ the inverse of the $N$-th minimal $L_p$-discrepancy is defined as $$N_p^{{\rm disc}}(\varepsilon,d):=\min\{N \in \N \ : \ {\rm disc}_p(N,d) \le \varepsilon \ {\rm disc}_p(0,d)\}.$$ The question is now how fast $N_p^{{\rm disc}}(\varepsilon,d)$ increases, when $d \rightarrow \infty$ and $\varepsilon \rightarrow 0$. 

It is well known and easy to check that for the initial $L_p$-discrepancy we have 
\begin{equation}\label{initdisc}
{\rm disc}_p(0,d)=\left\{ 
\begin{array}{ll}
\frac{1}{(p+1)^{d/p}} & \mbox{if $p \in [1,\infty)$},\\[0.5em]
1 & \mbox{if $p=\infty$}.
\end{array}
\right.
\end{equation}
Here we observe a difference in the cases of finite and infinite $p$. While for $p=\infty$ the initial discrepancy equals 1 for every dimension $d$, for finite values of $p$ the initial discrepancy tends to zero exponentially fast with the dimension. 
 
For $p \in \{2,\infty\}$ the behavior of the inverse of $N$-th minimal $L_p$-discrepancy is well understood. In the $L_2$-case it is known that for all $\varepsilon \in (0,1)$ we have $$(1.125)^d (1-\varepsilon^2) \le N_2^{{\rm disc}}(\varepsilon,d) \le 1.5^d \varepsilon^{-2}.$$ Here the lower bound was first shown by Wo\'{z}niakowski in \cite{Wo99} (see also \cite{NW01,NW10}) and the upper bound follows from an easy averaging argument, see, e.g., \cite[Sec.~9.3.2]{NW10}. 

In the $L_{\infty}$-case it was shown by Heinrich, Novak, Wasilkowski and Wo\'{z}nia\-kow\-ski in \cite{hnww} that there exists an absolute positive constant $C$ such that for every $d \in \N$ and $\varepsilon \in (0,1)$ we have $$N_{\infty}^{{\rm disc}}(\varepsilon,d) \le C d \varepsilon^{-2}.$$ The currently smallest known value of $C$ is $6.23401\ldots$ as shown in \cite{GPW}. On the other hand, Hinrichs~\cite{Hi04} proved that there exist numbers $c>0$ and $\varepsilon_0 \in (0,1)$ such that for all $d \in \N$ and all $\varepsilon \in (0,\varepsilon_0)$ we have $$N_{\infty}^{{\rm disc}}(\varepsilon,d) \ge c d \varepsilon^{-1}.$$ 

So while the inverse of $L_2$-discrepancy grows exponentially fast with the dimension $d$, 
the inverse of the star-discrepancy depends only linearly on the dimension $d$. 
One says that the $L_2$-discrepancy suffers from the curse of dimensionality. 
In information based complexity theory the behavior of the inverse 
of star-discrepancy is called ``polynomial tractability'' (see, e.g., \cite{NW10}).
Hence the situation is clear (and quite different) for $p \in \{2,\infty\}$. 
But what happens for all other $p \not \in \{2,\infty\}$? This question was open for many years. 

Quite recently we proved in \cite{NP23} that the $L_p$-discrepancy 
suffers from the curse for all values $p$ of the 
form $$p=\frac{2 \ell}{2 \ell-1} \quad \mbox{ with $\ell \in \mathbb{N}$}.$$
We used ideas that only work if $q$ (the H\"older conjugate of $p$) is an even integer and therefore could not handle other values of $p$. Now, with a  different approach,  we  solve the  question for all $p \in (1,\infty)$.

\begin{thm}\label{thm1}
For every $p$ in $(1,\infty)$ there exists a real $C_p$ that is strictly larger than 1, such that for all $d \in \N$ and all $\varepsilon \in (0,1/2)$ we have $$N_p^{{\rm disc}}(\varepsilon,d) \ge C_p^d \, (1-2 \varepsilon).$$ We have $$C_p=\left(\frac{1}{2}+\frac{p+1}{p}  \frac{1+2^{p/(p+1)}-2^{1/(p+1)}}{4}\right)^{-1} > 1.$$ 
In particular, for all $p$ in $(1,\infty)$ the $L_p$-discrepancy suffers from the curse of dimensionality.
\end{thm}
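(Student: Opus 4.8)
The plan is to deduce Theorem~\ref{thm1} from a lower bound on the worst-case error of arbitrary \emph{positive} quadrature formulas, and to reduce that in turn to a single explicit inequality. First I would invoke the Hlawka--Zaremba identity: for the anchored Sobolev space $F_{d,q}$ of functions whose mixed derivative $\partial^d f/\partial x_1\cdots\partial x_d$ lies in $L_q$ (anchor at $\boldsymbol 1$, with $1/p+1/q=1$), integration by parts gives, for any rule $Q_N(f)=\sum_{k=1}^N w_k f(\bsx_k)$ and the integration functional $I(f)=\int_{[0,1]^d}f(\bsx)\rd\bsx$,
\[
I(f)-Q_N(f)=(-1)^d\int_{[0,1]^d}\bigl(V(\bsx)-S(\bsx)\bigr)\,\frac{\partial^d f}{\partial x_1\cdots\partial x_d}(\bsx)\rd\bsx,
\]
where $V(\bsx)=\prod_{j=1}^d x_j$ and $S(\bsx)=\sum_{k=1}^N w_k\,\mathbf 1_{\{\bsx_k\le\bsx\}}$ (inequality coordinatewise). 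By $L_p$--$L_q$ duality the worst-case error equals $\|V-S\|_p$, and for equal weights $w_k=1/N$ this is exactly $L_{p,N}(\cP)$. Since equal weights form a positive quadrature, $\mathrm{disc}_p(N,d)\ge\inf_{\cP,\,w\ge 0}\|V-S\|_p$, so it suffices to bound the latter. Concretely I aim to prove the core inequality
\[
\|V-S\|_p^p\ \ge\ (p+1)^{-d}\,\bigl(1-N\,A_p^{\,d}\bigr),\qquad A_p:=C_p^{-1}.
\]
Granting this, $\mathrm{disc}_p(N,d)^p\ge(p+1)^{-d}(1-NA_p^d)$; hence if $\mathrm{disc}_p(N,d)\le\eps\,(p+1)^{-d/p}$ then $\eps^p\ge 1-NA_p^d$, so $N\ge C_p^d(1-\eps^p)\ge C_p^d(1-2\eps)$, the last step because $\eps^{p-1}<1<2$ for $\eps\in(0,1/2)$ and $p>1$. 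This is exactly the asserted bound on $N_p^{\mathrm{disc}}(\eps,d)$.

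For the core inequality I would study the energy reduction $(p+1)^{-d}-\|V-S\|_p^p=\int_{[0,1]^d}h_{\bsx}(S(\bsx))\rd\bsx$, where $h_{\bsx}(t)=V(\bsx)^p-|V(\bsx)-t|^p$ and $\|V\|_p^p=(p+1)^{-d}=\mathrm{disc}_p(0,d)^p$. The key structural observation is that for each fixed $\bsx$ the map $t\mapsto h_{\bsx}(t)$ is concave (since $t\mapsto|V-t|^p$ is convex) and vanishes at $t=0$; a concave function vanishing at the origin is subadditive on $[0,\infty)$. Here is where positivity of the weights is essential: writing $S=\sum_k s_k$ with $s_k(\bsx)=w_k\mathbf 1_{\{\bsx_k\le\bsx\}}\ge 0$ gives the pointwise bound $h_{\bsx}(S(\bsx))\le\sum_k h_{\bsx}(s_k(\bsx))$, so that after integration
\[
(p+1)^{-d}-\|V-S\|_p^p\ \le\ \sum_{k=1}^N\int_{[\bsx_k,\boldsymbol 1]}\bigl(V(\bsx)^p-|V(\bsx)-w_k|^p\bigr)\rd\bsx\ =:\ \sum_{k=1}^N T(\bsx_k,w_k).
\]
Everything then reduces to the single-node estimate $\sup_{\bsy\in[0,1)^d,\,w\ge 0}T(\bsy,w)\le A_p^d(p+1)^{-d}$, which yields the core inequality at once.

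The main obstacle is precisely this per-node bound, and it is genuinely nonlinear. Any tangent (linear-in-$w$) estimate of $V^p-|V-w|^p$ is useless, since the weights are unconstrained and a linear bound is driven to $+\infty$ as $w\to\infty$; finiteness of $\sup_w T$ comes solely from the over-weighting penalty contained in the identity $|V-w|^p=(V-w)_+^p+(w-V)_+^p$, which must be retained. My plan is to split the integrand this way and then lower bound $\int_{[\bsy,\boldsymbol 1]}|V-w|^p$ by a product over the $d$ coordinates, thereby reducing $\sup_{\bsy,w}T$ to a one-dimensional optimization over a single threshold; solving that optimization (balancing the gain $V^p-(V-w)_+^p$ against the penalty $(w-V)_+^p$) is what produces the explicit constant $A_p$, with $2^{1/(p+1)}$ and $2^{p/(p+1)}$ arising as its critical values. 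I expect the factorization to be the delicate step, because $\int_{[\bsy,\boldsymbol 1]}|V-w|^p$ does not factor directly and a careful coordinatewise argument, or an induction on $d$, is needed for a clean product bound. Finally I would check the elementary but tight inequality $A_p<1$ for all $p\in(1,\infty)$; the boundary computations $A_1=A_\infty=1$ are reassuring, as they recover exactly the two classical regimes ($p=1$ still open, $p=\infty$ tractable) in which no exponential lower bound can hold.
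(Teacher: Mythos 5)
Your reduction is genuinely different from the paper's argument: the paper never passes to the dual $L_p$-norm of the discrepancy function, but instead decomposes the worst-case function $h_1(x)=1-(1-x)^p$ at the point $a=1-2^{-1/(p+1)}$ into pieces with balanced $L_q$-norms, builds nonnegative tensor-product fooling functions $P_i$ with $P_i(\bsx_i)=h_d(\bsx_i)$, and exploits positivity only through $A_{d,N}(h_d)\le A_{d,N}\bigl(\sum_i P_i\bigr)$. Your opening steps are sound: the duality identifying the worst-case error with $\Vert V-S\Vert_p$, the observation that $t\mapsto V(\bsx)^p-|V(\bsx)-t|^p$ is concave and vanishes at $t=0$, hence subadditive, so that the energy gain splits into per-node terms $T(\bsx_k,w_k)$, and the final arithmetic $1-\eps^p\ge 1-2\eps$. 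For $p=2$ this is exactly Wo\'{z}niakowski's proof: there $T$ is a quadratic in $w$, the supremum factorizes exactly over coordinates, and one gets $A_2=8/9$, i.e.\ the constant $1.125$.

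The genuine gap is the per-node estimate $\sup_{\boldsymbol{y},w}T(\boldsymbol{y},w)\le A_p^{\,d}(p+1)^{-d}$ with $A_p<1$ independent of $d$, which carries the entire content of the theorem and which you explicitly defer (``the factorization \dots is the delicate step''). For $p\ne 2$ the integrand $|V(\bsx)-w|^p$ does not factor over coordinates, the optimal $w$ has no closed form, and you supply no argument that the supremum is dominated by a $d$-th power of a constant strictly less than $1$; the proposed coordinatewise lower bound on the penalty $\int_{[\boldsymbol{y},\boldsymbol{1}]}(w-V)_+^p$ is precisely the hard analytic step, and it is the difficulty the paper circumvents entirely with its fooling-function construction. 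Moreover, your claim that this optimization would ``produce the explicit constant $A_p$, with $2^{1/(p+1)}$ and $2^{p/(p+1)}$ arising as its critical values'' cannot be correct: in the paper those exponents come from equalizing $\int_0^a|h_1'|^q$ and $\int_a^1|h_1'|^q$, a mechanism absent from your setup, and already at $p=2$ your route yields $A_2=8/9$ whereas $C_2^{-1}\approx 0.9978$. The approach is promising --- if the per-node bound can be proved it would likely give \emph{better} constants than the paper's, as the $p=2$ case suggests --- but as written the key inequality is asserted, not proved.
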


Figure~\ref{fig_ca} shows the graph of $C_p$ for $p \in [1,50]$. An improvement will be given in Section~\ref{sec:RemOQ}.

  \begin{figure}
  \begin{center}
  \includegraphics[width=10cm]{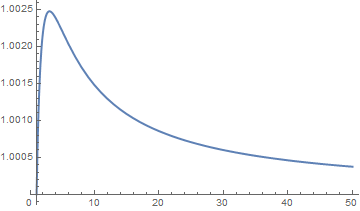}
  \caption{Plot of $C_p$ for $p \in [1,50]$. Note that $C_1=1$ and $\lim_{p \rightarrow \infty}
   C_p=1$. We have $C_2=1.0022\ldots$, $C_3=1.00248\ldots$, $C_4=1.00238\ldots$.}
  \label{fig_ca}
  \end{center}
  \end{figure}

The result will follow from a more general result about the integration problem in the anchored Sobolev space with a $q$-norm that will be introduced and discussed in the following Section~\ref{sec:int}. This result will be stated as Theorem~\ref{thm2}.
We end the paper with three open problems. 

\section{Relation to numerical integration}\label{sec:int}

It is well known that the $L_p$-discrepancy is related to multivariate integration (see, e.g., \cite[Chapter~9]{NW10}). Since this relation is essential for the present approach we repeat the brief summary from \cite[Section~2]{NP23}. From now on let $p,q \ge 1$ be H\"older conjugates, i.e., $1/p+1/q=1$. For $d=1$ let $W_q^1([0,1])$ be the space of absolutely continuous functions whose first derivatives belong to the space $L_q([0,1])$. For $d>1$ consider the $d$-fold tensor product space which is denoted by $$W_q^{\boldsymbol{1}}:=W_q^{(1,1,\ldots,1)}([0,1]^d)$$ and which is the Sobolev space of functions on $[0,1]^d$ that are once differentiable in each variable and whose first derivative $\partial^d f/\partial \bsx$ has finite $L_q$-norm, where $\partial \bsx=\partial x_1 \partial x_2 \ldots \partial x_d$. Now consider the subspace of functions that satisfy the boundary conditions $f(\bsx)=0$ if at least one component of $\bsx=(x_1,\ldots,x_d)$ equals 0 and equip this subspace with the norm $$\|f\|_{d,q}:=\left(\int_{[0,1]^d} \left|\frac{\partial^d}{\partial \bsx}f(\bsx)\right|^q \rd \bsx \right)^{1/q} \quad \mbox{for $q \in [1,\infty)$,}$$ and $$\|f\|_{d,\infty}:=\sup_{\bsx \in [0,1]^d}\left|\frac{\partial^d}{\partial \bsx}f(\bsx)\right|  \quad \mbox{for $q=\infty$.}$$ That is, consider the space $$F_{d,q}:=\{f \in W_q^{\boldsymbol{1}} \ : \ f(\bsx)=0\ \mbox{if $x_j=0$ for some $j \in [d]$ and $\|f\|_{d,q}< \infty$}\}.$$ 

Now consider multivariate integration $$I_d(f):=\int_{[0,1]^d} f(\bsx) \rd \bsx \quad \mbox{for $f \in F_{d,q}$}.$$
We approximate the integrals $I_d(f)$ by linear algorithms of the form 
\begin{equation}\label{def:linAlg}
A_{d,N}(f)=\sum_{k=1}^N a_j f(\bsx_k),
\end{equation}
where $\bsx_1,\bsx_2,\ldots,\bsx_N$ are in $[0,1)^d$ and $a_1,a_2,\ldots,a_N$ are real weights that we call integration weights. If $a_1=a_2=\ldots =a_N=1/N$, then the linear algorithm \eqref{def:linAlg} is a so-called quasi-Monte Carlo algorithm, and we denote this by $A_{d,N}^{{\rm QMC}}$.

Define the worst-case error of an algorithm \eqref{def:linAlg} by 
\begin{equation}\label{eq:wce}
e(F_{d,q},A_{d,N})=\sup_{f \in F_{d,q}\atop \|f\|_{d,q}\le 1} \left|I_d(f)-A_{d,N}(f)\right|.
\end{equation}
For a quasi-Monte Carlo algorithm $A_{d,N}^{{\rm QMC}}$ it is well known that %(see, e.g., \cite{SW1998}) that 
$$
e(F_{d,q},A_{d,N}^{{\rm QMC}})= L_{p,N}(\overline{\cP}),
$$ 
where $L_{p,N}(\overline{\cP})$ is the $L_p$-discrepancy of the point set
\begin{equation}\label{def:oP}
\overline{\cP}=\{\boldsymbol{1} - \bsx_k \ : \ k\in \{1,2,\ldots,N\}\},
\end{equation}
where $\boldsymbol{1} - \bsx_k$ is defined as the component-wise difference of the vector containing only ones and $\bsx_k$, 
see, e.g.,  \cite[Section~9.5.1]{NW10} for the case $p=2$.

For general linear algorithms  \eqref{def:linAlg} the worst-case error is the so-called generalized $L_p$-discrepancy 
$$
e(F_{d,q},A_{d,N})= \overline{L}_{p,N}(\overline{\cP},\cA),
$$ 
where $\overline{\cP}$ is like in \eqref{def:oP} and $\cA$ consists of exactly the coefficients from the given linear algorithm (see \cite{NW10}). 
Here for points $\cP=\{\bsx_1,\bsx_2,\ldots,\bsx_N\}$ and corresponding 
coefficients $\cA=\{a_1,a_2,\ldots,a_N\}$ the discrepancy 
function is $$\overline{\Delta}_{\cP,\cA}(\bst)=\sum_{k=1}^N a_k {\bf 1}_{[\boldsymbol{0},\bst)}(\bsx_k) 
- t_1 t_2\cdots t_d$$ for $\bst=(t_1,t_2,\ldots,t_d)$ in $[0,1]^d$ and the generalized $L_p$-discrepancy is $$\overline{L}_{p,N}(\cP,\cA)=\left(\int_{[0,1]^d} |\overline{\Delta}_{\cP,\cA}(\bst)|^p \rd \bst\right)^{1/p} \quad \mbox{for $p \in [1,\infty)$,}$$ with the usual adaptions for $p=\infty$. If $a_1=a_2=\ldots=a_N=1/N$, then we are back to the classical definition of $L_p$-discrepancy from Section~\ref{sec:intro}.

From this point of view we now study the more general problem of numerical integration in $F_{d,q}$ rather than only the $L_p$-discrepancy (which corresponds to quasi-Monte Carlo algorithms -- although with suitably ``reflected'' points). We consider linear algorithms where we restrict ourselves % exclusively 
to non-negative weights $a_1,\ldots,a_N$ (thus QMC-algorithms are included in our setting).

We define the $N$-th minimal worst-case error as $$e_q(N,d):=\min_{A_{d,N}} |e(F_{d,q},A_{d,N})|$$ where the minimum is extended over all linear algorithms of the form \eqref{def:linAlg} based on $N$ function evaluations along points $\bsx_1,\bsx_2,\ldots,\bsx_N$ from $[0,1)^d$ and with non-negative weights $a_1,\ldots,a_N \ge 0$. Note that for all $d,N \in \N$ we have 
\begin{equation}\label{ine:errdisc}
e_q(N,d) \le {\rm disc}_p(N,d).
\end{equation} 

The initial error is $$e_q(0,d)=\sup_{f \in F_{d,q}\atop \|f\|_{d,q}\le 1} \left|I_d(f)\right|.$$ 

We call $f \in F_{d,q}$ a worst-case function, if $I_d(f/\|f\|_{d,q})=e_q(0,d)$.

\begin{lem}\label{le:interr}
Let $d \in \N$ and let $q \in (1,\infty]$ and $p \in [1,\infty)$ with $1/p+1/q=1$. Then we have $$e_q(0,d)=\frac{1}{(p+1)^{d/p}}$$ and the worst-case function in $F_{d,q}$ is given by $h_d(\bsx)=h_1(x_1) \cdots h_1(x_d)$ for $\bsx=(x_1,\ldots,x_d) \in [0,1]^d$, where $h_1(x)=1-(1-x)^p$. Furthermore, we have $$\int_0^1 h_1(t)\rd t= \frac{p}{p+1}\quad \mbox{ and } \quad \|h_1\|_{1,q}= \frac{p}{(p+1)^{1/q}}.$$
\end{lem}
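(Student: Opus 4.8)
The plan is to exploit the boundary conditions defining $F_{d,q}$ to turn the integration functional $I_d$ into a single pairing against the mixed derivative, and then to read off both the value $e_q(0,d)$ and the extremizer from the equality case of H\"older's inequality. First I would set $g=\partial^d f/\partial\bsx$ and use that $f$ vanishes whenever a coordinate is $0$ to obtain the representation
\begin{equation*}
f(\bsx)=\int_0^{x_1}\cdots\int_0^{x_d} g(\bst)\,\rd\bst ,
\end{equation*}
valid for every $f\in F_{d,q}$; conversely every $g\in L_q([0,1]^d)$ arises this way, and by definition of the norm $\|f\|_{d,q}=\|g\|_{L_q}$. Substituting into $I_d(f)=\int_{[0,1]^d} f(\bsx)\,\rd\bsx$ and interchanging the order of integration (Fubini, justified by integrability) collapses the nested integrals: the indicator $\prod_{j=1}^d \mathbf 1_{\{t_j<x_j\}}$ integrates over $\bsx$ to $\prod_{j=1}^d(1-t_j)$, so that
\begin{equation*}
I_d(f)=\int_{[0,1]^d} g(\bst)\prod_{j=1}^d (1-t_j)\,\rd\bst .
\end{equation*}

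Next I would apply H\"older's inequality to this representation. With $\|g\|_{L_q}\le 1$ it gives
\begin{equation*}
|I_d(f)|\le \Big\|\prod_{j=1}^d(1-t_j)\Big\|_{L_p}=\prod_{j=1}^d\Big(\int_0^1(1-t)^p\,\rd t\Big)^{1/p}=\frac{1}{(p+1)^{d/p}} ,
\end{equation*}
which is the upper bound for $e_q(0,d)$; the product structure makes the $L_p$-norm factorize into the one-dimensional integrals $\int_0^1(1-t)^p\,\rd t=1/(p+1)$. For $q=\infty$ (the case $p=1$) the same computation applies with the dual pairing $L_\infty$--$L_1$, giving $\|\prod_j(1-t_j)\|_{L_1}=2^{-d}$.

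For the matching lower bound I would invoke the equality case of H\"older. Since $\prod_j(1-t_j)\ge 0$ on $[0,1]^d$, equality forces $g$ to be a positive multiple of $\prod_j(1-t_j)^{p-1}$ (respectively $g\equiv 1$ when $q=\infty$). Integrating this extremal $g$ back through the representation, using $\int_0^x(1-t)^{p-1}\,\rd t=(1-(1-x)^p)/p$, produces exactly $h_d(\bsx)=\prod_{j=1}^d h_1(x_j)$ up to a positive scalar, so $h_d$ is a worst-case function and the upper bound is attained. The final two one-dimensional identities are then direct: $\int_0^1 h_1=1-1/(p+1)=p/(p+1)$, and since $h_1'(t)=p(1-t)^{p-1}$ together with the key exponent relation $(p-1)q=p$ (equivalent to $1/p+1/q=1$) one gets $\|h_1\|_{1,q}^q=p^q\int_0^1(1-t)^p\,\rd t=p^q/(p+1)$.

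I expect the only genuine subtlety to be the integral representation and the Fubini step in full generality: one must justify that the mixed-derivative reconstruction holds for all $f$ in the tensor-product Sobolev space with the stated zero boundary traces, and that the extremizer $g=c\prod_j(1-t_j)^{p-1}$ indeed lies in $L_q$ so that the corresponding $h_d$ belongs to $F_{d,q}$. The arithmetic, by contrast, factorizes over the coordinates and is routine once the relation $(p-1)q=p$ is in hand.
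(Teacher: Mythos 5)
Your proof is correct: the integral representation $f(\bsx)=\int_{[0,\bsx)}g(\bst)\,\rd\bst$, the Fubini step giving $I_d(f)=\int g(\bst)\prod_j(1-t_j)\,\rd\bst$, and the H\"older duality argument with its equality case (using $(p-1)q=p$) yield exactly the stated value of $e_q(0,d)$, the extremizer $h_d$, and the two one-dimensional identities. The paper itself does not prove this lemma but defers to \cite{NP23}, and your argument is the standard duality proof one finds there, so there is nothing to flag beyond the routine verification you already identify (that the anchored representation holds for all of $F_{d,q}$ and that the extremal $g$ lies in $L_q$, which is immediate since $(1-t)^{p-1}$ is bounded for $p\ge 1$).
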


For a proof of Lemma~\ref{le:interr} see \cite{NP23}.

Note that for all H\"older conjugates $q \in (1,\infty]$ and $p \in [1,\infty)$ and for all $d \in \N$ we have $$e_q(0,d)={\rm disc}_p(0,d).$$

Now we define the information complexity as the minimal number of function evaluations necessary in order to reduce the initial error by a factor of $\varepsilon$. 
For $d \in \N$ and $\varepsilon \in (0,1)$ put 
$$N^{{\rm int}}_q(\varepsilon,d):= \min\{N \in \N \ : \ e_q(N,d) \le \varepsilon\ e_q(0,d)\}.$$
We stress that $N^{{\rm int}}_q(\varepsilon,d)$
is a kind of restricted complexity since we only allow positive quadrature formulas.

From \eqref{ine:errdisc}, \eqref{initdisc} and Lemma~\ref{le:interr} it follows that for all  H\"older conjugates $q \in (1,\infty]$ and $p \in [1,\infty)$ and for all $d \in \N$ and $\varepsilon \in (0,1)^d$ we have  $$N^{{\rm int}}_q(\varepsilon,d) \le N^{{\rm disc}}_p(\varepsilon,d).$$

Hence, Theorem~\ref{thm1} follows from the following more general result.

\begin{thm}\label{thm2}
For every $q$ in $(1,\infty)$ put $$C_p=\left(\frac{1}{2}+\frac{p+1}{p}  \frac{1+2^{p/(p+1)}-2^{1/(p+1)}}{4}\right)^{-1},$$ where $p$ is the H\"older conjugate of $q$. Then $C_p>1$ and for all $d \in \N$ and $\varepsilon \in (0,1/2)$ we have 
\begin{equation}\label{lbd:C}
N_q^{{\rm int}}(\varepsilon,d) \ge C_p^d \, (1-2 \varepsilon).
\end{equation}
In particular, for all $q$ in $(1,\infty)$ the integration problem in $F_{d,q}$ suffers from the curse of dimensionality for positive quadrature formulas. 
\end{thm}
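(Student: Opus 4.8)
The plan is to pass to the integration formulation and reduce everything to one additive inequality. Since every $f\in F_{d,q}$ is anchored, I would use $f(\bsx)=\int_{[\bszero,\bsx)}(\partial^d f/\partial\bst)(\bst)\rd\bst$; writing $g=\partial^d f/\partial\bst$, this gives $I_d(f)=\int_{[0,1]^d}g(\bst)\prod_{j=1}^d(1-t_j)\rd\bst$ and $A_{d,N}(f)=\int_{[0,1]^d}g(\bst)\sum_{k=1}^N a_k\mathbf 1_{[\bszero,\bsx_k)}(\bst)\rd\bst$, so that $I_d(f)-A_{d,N}(f)=\int g\cdot D$ with $D(\bst)=\prod_j(1-t_j)-\sum_k a_k\mathbf1_{[\bszero,\bsx_k)}(\bst)$. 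By $L_p$--$L_q$ duality this yields $e(F_{d,q},A_{d,N})=\|D\|_{L_p}$, where the subtracted part $\sum_k a_k\mathbf1_{[\bszero,\bsx_k)}$ is nonnegative precisely because the weights are nonnegative. I would then reduce Theorem~\ref{thm2} to the estimate
\[ e_q(N,d)\ \ge\ \tfrac12\,e_q(0,d)\,\bigl(1-N\,C_p^{-d}\bigr), \]
which, via the definition of $N_q^{\rm int}(\varepsilon,d)$ and Lemma~\ref{le:interr}, gives \eqref{lbd:C} after rearrangement.

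The central device is to exploit the nonnegativity of the weights through a \emph{sign trick}, which lets me avoid any control on their (a priori unbounded) size. Rather than estimating $\|D\|_{L_p}$ directly, I would produce, for an arbitrary fixed node set $\bsx_1,\dots,\bsx_N\in[0,1)^d$, a single fooling function $f\in F_{d,q}$ with $\|f\|_{d,q}\le 1$, with $f(\bsx_k)\le 0$ for every $k$, and with $I_d(f)$ as large as possible. Since $a_k\ge 0$, the quadrature part then satisfies $\sum_k a_k f(\bsx_k)\le 0$, whence $e(F_{d,q},A_{d,N})\ge I_d(f)-\sum_k a_k f(\bsx_k)\ge I_d(f)$. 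The key observation is that I only need $f\le 0$ at the $N$ prescribed nodes, not on all of $[0,1]^d$, and this is what makes the construction feasible. The task is thereby reduced to the extremal question of how large $I_d(f)$ can be, over anchored $f$ of unit $q$-norm that are nonpositive at $N$ arbitrary points.

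To build such an $f$ I would start from the worst-case function $h_d=h_1\otimes\cdots\otimes h_1$ of Lemma~\ref{le:interr}, which is positive and satisfies $I_d(h_d)/\|h_d\|_{d,q}=e_q(0,d)$, and subtract localized corrections, one attached to each node, so as to pull the value down to $0$ at every $\bsx_k$. Each correction is a product-type dent whose mixed derivative is supported near the corresponding node; the cost of a single dent is measured by the loss it inflicts on $I_d(f)$ and the amount of $q$-norm it consumes, and the efficiency of a dent is governed by a purely one-dimensional variational problem against the measure $(1-t)^p\rd t$. Here the half-mass point $x^\ast=1-2^{-1/(p+1)}$, at which $\int_0^{x^\ast}(1-t)^p\rd t=\tfrac12\int_0^1(1-t)^p\rd t$, enters as the worst location of a node, and optimizing the dent profile produces exactly the per-coordinate constant encoded in $C_p^{-1}=\tfrac12+\frac{p+1}{p}\,\frac{1+2^{p/(p+1)}-2^{1/(p+1)}}{4}$. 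Tensorizing the one-dimensional estimate over the $d$ coordinates converts this into the factors $e_q(0,d)$ and $C_p^{-d}$, summing the $N$ dents (while keeping the total $q$-norm at most $1$) yields the term $-N\,C_p^{-d}$, and the normalization of $h_d$ contributes the factor $\tfrac12$.

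The main obstacle is the quantitative bookkeeping of this construction. The $q$-norm of the mixed derivative is not additive, so I must ensure that the $N$ dents, placed at arbitrary and possibly clustered nodes, do not interfere in a way that inflates $\|f\|_{d,q}$ beyond $1$; controlling the aggregate norm while each dent still forces $f(\bsx_k)\le 0$ is the delicate point. Equally, the per-node loss must be bounded by $\tfrac12\,e_q(0,d)\,C_p^{-d}$ uniformly in the node position, which is precisely the content of the one-dimensional extremal computation whose optimum sits at $x^\ast$; obtaining the sharp constant $C_p$ rather than a weaker one hinges on solving this $1$D problem optimally and then tensorizing without loss. I expect the sharpness of the tensorization---showing that the worst $d$-dimensional configuration factorizes into $d$ independent copies of the worst one-dimensional dent---to be the step demanding the most care, since it is exactly the coupling across coordinates that defeats cruder arguments and kept the general case open.
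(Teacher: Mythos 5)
Your overall skeleton is sound and is in fact the paper's own argument viewed through a single fooling function: the paper works with $h_d-f^*$ where $f^*=\sum_{i=1}^N P_i$ and each $P_i$ is a nonnegative correction with $P_i(\bsx_i)=h_d(\bsx_i)$, so that $h_d-f^*$ is nonpositive at every node, the nonnegativity of the weights kills the quadrature term, and the norm is controlled by the plain triangle inequality $\Vert h_d-f^*\Vert_{d,q}\le \Vert h_d\Vert_{d,q}+N\alpha^d$. Consequently the ``interference'' between dents that you single out as the delicate point is a non-issue, and no sharpness of tensorization is needed; what is needed (and what your target inequality $e_q(N,d)\ge\tfrac12 e_q(0,d)(1-NC_p^{-d})$ glosses over) is the case split $N\le 2^{d/q}$ versus $N>2^{d/q}$ that justifies the factor $\tfrac12$, since for large $N$ the denominator $\max(\Vert h_d\Vert_{d,q},N\alpha^d)$ is no longer $\Vert h_d\Vert_{d,q}$.

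The genuine gap is that you never construct the corrections, and the description you give points in the wrong direction. A dent whose mixed derivative is ``supported near the node'' cannot work: for anchored $f$ one has $f(\bsx_k)=\int_{[\bszero,\bsx_k)}\frac{\partial^d f}{\partial\bsx}\,\rd\bsx$, so H\"older's inequality forces $\Vert {\rm dent}_k\Vert_{d,q}\ge h_d(\bsx_k)\cdot{\rm vol}({\rm supp})^{-1/p}$, and localizing the support makes each dent prohibitively expensive in $q$-norm; efficient profiles spread the derivative over whole boxes. The paper's $P_i$ are global tensor products built from a two-piece splitting of $h_1$ at $a=1-2^{-1/(p+1)}$; this point is chosen to equalize the $L_q$-masses of $h_1'$ on $[0,a]$ and on $[a,1]$ (giving each one-dimensional factor norm $2^{-1/q}\Vert h_1\Vert_{1,q}$ and hence $\Vert P_i\Vert_{d,q}=2^{-d/q}\Vert h_d\Vert_{d,q}$), not because it is a ``worst node location,'' and the piece used in coordinate $k$ is selected merely by which side of $a$ the coordinate $x_{i,k}$ lies on. The constant then arises as $C_p=\frac{p}{(p+1)\beta}$ with $\beta$ the larger of the two pieces' integrals --- an explicit computation, not the solution of a per-node variational problem --- so your claim that ``optimizing the dent profile produces exactly'' the stated closed form of $C_p$ is unsubstantiated (optimizing the profile node by node is precisely the spline refinement of Section~\ref{sec:RemOQ} and yields a different constant $\widetilde C_p$). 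As written, the proposal correctly reduces the theorem to the existence of nonnegative tensor-product corrections with $P_i(\bsx_i)=h_d(\bsx_i)$, $\int_{[0,1]^d}P_i\le\beta^d$ and $\Vert P_i\Vert_{d,q}\le\alpha^d$, but does not supply them.
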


\begin{proof}%[Proof of Theorem~\ref{thm2}] 
The proof of Theorem~\ref{thm2} is based on a suitable decomposition of the worst-case function $h_1$ from Lemma~\ref{le:interr}. This decomposition depends on $q$ and $p$, respectively, and will determine the value of $C_p$ in \eqref{lbd:C}.

For a decomposition point $a$ in $(0,1)$ that will be 
determined in a moment define the functions 
\begin{eqnarray*}
h_{1,1}(x) & = & \frac{1-(1-a)^p}{a} \min(x,a),\\
h_{1,2,(0)}(x) & = & \mathbf{1}_{[0,a]}(x)\left((1-(1-x)^p)-\frac{x}{a}(1-(1-a)^p) \right),\\
h_{1,2,(1)}(x) & = & \mathbf{1}_{[a,1]}(x)\left((1-(1-x)^p)-(1-(1-a)^p) \right).
\end{eqnarray*}
Then we have $h_1(x)=h_{1,1}(x)+h_{1,2,(0)}(x)+h_{1,2,(1)}(x)$. See Figure~\ref{fig_z} for an illustration.

\begin{figure}
  \begin{center}
  \includegraphics[width=8cm]{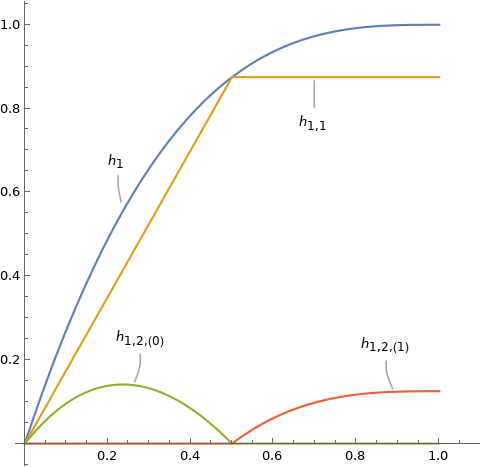}
  \caption{Decomposition functions $h_{1,1}$, $h_{1,2,(0)}$ and $h_{1,2,(1)}$ for $p=3$ and decomposition point $a=1/2$.}
  \label{fig_z}
  \end{center}
  \end{figure}

We choose the decomposition point $a$ such that  
$$
\Vert h_{1,1} + h_{1,2,(0)} \Vert_{1,q} = \Vert h_{1,1} + h_{1,2,(1)} \Vert_{1,q} =: \alpha .
$$
We have $$\Vert h_{1,1} + h_{1,2,(0)} \Vert_{1,q}^q=\int_0^a |p(1-x)^{p-1}|^q \rd x= p^q \frac{1-(1-a)^{p+1}}{p+1}$$ and $$\Vert h_{1,1} + h_{1,2,(1)} \Vert_{1,q}^q=\int_a^1 |p(1-x)^{p-1}|^q \rd x= p^q \frac{(1-a)^{p+1}}{p+1}.$$ Thus we have to choose $$a=1-\frac{1}{2^{1/(p+1)}}$$ and then $$\alpha=\frac{1}{2^{1/q}} \frac{p}{(p+1)^{1/q}} =\frac{\|h_1\|_{1,q}}{2^{1/q}}.$$ Obviously, $\alpha < \Vert h_1 \Vert_{1,q}$. 
Put 
$$
\beta := \max \left( \int_0^1 h_{1,1}(x) + h_{1,2,(0)}(x)\rd x , \int_0^1 h_{1,1}(x) + h_{1,2,(1)}(x)\rd x \right) 
$$
Then we have $\beta < \int_0^1 h_1(x)\rd x=\frac{p}{p+1}$.  

\bigskip

Consider a linear algorithm $A_{d,N}$ based in nodes $\bsx_1,\ldots,\bsx_N$ 
and non-negative weights $a_1,\ldots,a_N\ge 0$.  
Then for $i \in \{1,\ldots,N\}$ we define functions 
$$
P_i(\bsx) = \prod_{k=1}^d (h_{1,1}(x_k) + h_{1,2,(z_k)}(x_k)),\quad \bsx=(x_1,\ldots,x_d)\in [0,1]^d,  
$$
where $z_k \in \{ 0,1 \}$, $k\in \{1,\ldots,d\}$ are chosen in a way, such that 
$$
P_i (\bsx_i) = h_d (\bsx_i) .
$$
The functions $P_i$ are tensor products and therefore we have 
the simple formulas 
$$\|P_i\|_{d,q} =\alpha^d \quad \mbox{ and } \quad \int_{[0,1]^d} P_i(\bsx) \rd \bsx \le \beta^d$$
for every $i \in \{1,\ldots,N\}$. 

In order to estimate the error of $A_{d,N}$ we consider the two functions $h_d$ and 
$$
f^*= \sum_{i=1}^N P_i.
$$
For every $i \in \{1,\ldots,N\}$ we have 
$$
h_d (\bsx_i) \le f^* (\bsx_i) ,
$$
because $P_k \ge 0$ for all $k$ and $P_i (\bsx_i) = h_d (\bsx_i)$. 
This and the use of   %  exclusively 
non-negative quadrature weights implies that  
$$
A_{d,N} (h_d) \le A_{d,N}(f^*). 
$$
Now for real $y$ we use the notation $(y)_+:= \max (y, 0)$.  Then we have the error estimate 
\begin{equation}\label{errest1}
e(F_{d,q},A_{d,N})  \ge \frac{ \left(\int_{[0,1]^d} h_d(\bsx)\rd \bsx - \int_{[0,1]^d} f^*(\bsx)\rd \bsx\right)_+}{2 \max ( \Vert h_d \Vert_{d,q}, \Vert f^* \Vert_{d,q})},
\end{equation}
which is trivially true if $\int_{[0,1]^d} h_d(\bsx)\rd \bsx \le  \int_{[0,1]^d} f^*(\bsx)\rd \bsx$ and which is easily shown if $\int_{[0,1]^d} h_d(\bsx)\rd \bsx >  \int_{[0,1]^d} f^*(\bsx)\rd \bsx$, because then 
\begin{eqnarray*}
\lefteqn{\left(\int_{[0,1]^d} h_d(\bsx)\rd \bsx - \int_{[0,1]^d} f^*(\bsx)\rd \bsx\right)_+}\\
& \le & \int_{[0,1]^d} h_d(\bsx)\rd \bsx - A_{d,N}(h_d)+A_{d,N}(f^*)-\int_{[0,1]^d} f^*(\bsx)\rd \bsx\\
& \le & \|h_d\|_{d,q} \, e(F_{d,q},A_{d,N})+\|f^*\|_{d,q} \, e(F_{d,q},A_{d,N})\\
& \le & 2 \max(\|h_d\|_{d,q},\|f^*\|_{d,q}) \, e(F_{d,q},A_{d,N}).
\end{eqnarray*}

From the triangle inequality we obtain 
$$
\Vert f^* \Vert_{d,q} \le N \alpha^d=N \frac{\|h_d\|_{d,q}}{2^{d/q}},
$$ 
and we also have 
$$
\int_{[0,1]^d} f^*(\bsx)\rd \bsx \le N  \beta^d.
$$ 
Inserting into \eqref{errest1} yields 
$$
e(F_{d,q},A_{d,N}) \ge \frac{ \left(\int_{[0,1]^d} h_d(\bsx)\rd \bsx - N \beta^d\right)_+} {2 \Vert h_d \Vert_{d,q} \max (1, 
N/2^{d/q} ) } .
$$
Since the right hand side is independent of $\bsx_1,\ldots,\bsx_N$ and $a_1,\ldots,a_N$ we obtain 
\begin{equation}\label{lberr}
e_q(N,d)\ge \frac{ \left(\int_{[0,1]^d} h_d(\bsx)\rd \bsx - N \beta^d\right)_+} {2 \Vert h_d \Vert_{d,q} \max (1, 
N/2^{d/q} )  } .
\end{equation}
Put $\gamma:=\frac{1}{\beta} \int_0^1 h_1(x)\rd x=\frac{1}{\beta} \frac{p}{p+1}$. Then we have $\gamma >1$.

Now let $\varepsilon \in (0,1/2)$ and assume that $e_q(N,d) \le \varepsilon \, e_q(0,d)$. This and $e_q(0,d)\|h_d\|_{d,q}=(\frac{p}{p+1})^d$ implies that
$$2 \varepsilon  \left(\frac{p}{p+1}\right)^d \max \left(1, 
\frac{N}{2^{d/q}}\right)  \ge  \left(\int_{[0,1]^d} h_d(\bsx)\rd \bsx - N \beta^d\right)_+.$$
If $N \le \min(2^{d/q},\gamma^d)$, then we obtain
\begin{align*}
\left(\frac{p}{p+1}\right)^d  - N \beta^d  = & \int_{[0,1]^d} h_d(\bsx)\rd \bsx - N \beta^d \\
= &  \left(\int_{[0,1]^d} h_d(\bsx)\rd \bsx - N \beta^d\right)_+ \\
\le & \, 2 \varepsilon \left(\frac{p}{p+1}\right)^d \max \left(1, \frac{N}{2^{d/q}}\right)\\
= & \, 2 \varepsilon \left(\frac{p}{p+1}\right)^d. 
\end{align*}
Hence $$N \ge \left(\frac{1}{\beta} \frac{p}{p+1}\right)^d (1-2 \varepsilon) \ge (\min(2^{1/q},\gamma))^d (1-2 \varepsilon).$$
If $N \ge \min(2^{d/q},\gamma^d)$, then we trivially have $$N \ge (\min(2^{1/q},\gamma))^d (1-2 \varepsilon).$$ 
This yields $$N_q^{{\rm int}}(\varepsilon,d)\ge (\min(2^{1/q},\gamma))^d (1-2 \varepsilon),$$ and we are done.

It remains to compute the values for $C_p:=\min(2^{1/q},\gamma)$. Obviously, $C_p>1$, since $\gamma>1$. For $a=1-\frac{1}{2^{1/(p+1)}}$ we have 
\begin{align*}
\int_0^1 h_{1,1}(x)+h_{1,2,(0)}(x) \rd x = & \int_0^a 1-(1-x)^p \rd x + \int_a^11-(1-a)^p \rd x\\
 = & \, a-\frac{1-(1-a)^{p+1}}{p+1}+(1-a) (1-(1-a)^p)\\
% = & 1-\frac{1}{2^{1/(p+1)}}-\frac{1}{2(p+1)}+\frac{1}{2^{1/(p+1)}} \left(1-\frac{1}{2^{p/(p+1)}}\right) \\
  = & \, \frac{1}{2} \frac{p}{p+1}
\end{align*}
and
\begin{align*}
\int_0^1 h_{1,1}(x)+h_{1,2,(1)}(x) \rd x = & \int_0^a \frac{1-(1-a)^p}{a} x \rd x + \int_a^1 1-(1-x)^p \rd x\\
 = & \,  \frac{a (1-(1-a)^p)}{2} +1-a-\frac{(1-a)^{p+1}}{p+1}\\
% = & \frac{1}{2} \left(1-\frac{1}{2^{1/(p+1)}}\right)\left(1-\frac{1}{2^{p/(p+1)}}\right)+\frac{1}{2^{1/(p+1)}} -\frac{1}{2 (p+1)}\\
 = & \,  \frac{1}{2} \frac{p}{p+1} + \frac{1+2^{p/(p+1)}-2^{1/(p+1)}}{4}.
\end{align*}
Hence $$\beta=\frac{1}{2} \frac{p}{p+1} + \frac{1+2^{p/(p+1)}-2^{1/(p+1)}}{4}.$$ Therefore $$\gamma=\frac{1}{\frac{1}{2}+\frac{p+1}{p}  \frac{1+2^{p/(p+1)}-2^{1/(p+1)}}{4}}$$ 
and $$C_p=\min(2^{1/q},\gamma)= \min(2^{1-1/p},\gamma)=\gamma=\frac{1}{\frac{1}{2}+\frac{p+1}{p}
  \frac{1+2^{p/(p+1)}-2^{1/(p+1)}}{4}}.$$
\end{proof}

\section{Remark and open problems}\label{sec:RemOQ}   

There are many spaces where the curse of dimensionality 
is present for high dimensional integration, 
see the recent survey paper \cite{No24}. 
For $p=\infty$ the discrepancy is tractable and 
this property does not hold for many unweighted 
spaces; for a recent example see Krieg~\cite{Kr24}.

\bigskip

The value of $C_p$ in Theorem~\ref{thm1} and \ref{thm2} (see also Figure~\ref{fig_ca}) can be improved with the following ``spline'' method:

For $y \in [0,1]$ define the linear splines
$$s_y(x)=\left\{ 
\begin{array}{ll}
\frac{1-(1-y)^p}{y} x & \mbox{if $x < y$,}\\[0.5em]
1-(1-y)^p & \mbox{if $x\ge y$.}
\end{array}
\right.
$$
Then $s_y \in F_{1,q}$, $s_y \ge 0$ and $s_y(y)=h_1(y)$, the worst-case function from Lemma~\ref{le:interr}. Put \begin{equation*}%\label{est:alphabeta}
\alpha:= \max_{y \in [0,1]} \|s_y\|_{1,q} \quad \mbox{and}\quad \beta:= \max_{y \in [0,1]} \int_0^1 s_y(x)\rd x.
\end{equation*} 
It is easily shown that $$\alpha=\max_{y \in [0,1]} \frac{1-(1-y)^p}{y^{1/p}}\quad \mbox{and}\quad \beta=\max_{y \in [0,1]} (1-(1-y)^p)\left(1-\frac{y}{2}\right).$$ Furthermore, it is elementary but tedious to show that $\alpha < \|h_1\|_{1,q}$. On the other hand, since for every $y \in [0,1]$ we have $0 \le s_y(x) < h_1(x)$, it is clear that $\beta < \int_0^1 h_1(x) \rd x$. This yields that
\begin{equation}\label{def:C1new}
\widetilde{C}_p:=\min\left(\frac{\|h_1\|_{1,q}}{\alpha},\frac{1}{\beta} \int_0^1 h_1(x)\rd x\right) >1.
\end{equation}

Now we modify the approach in the proof of Theorem~\ref{thm2} in the following way: As before, consider a linear algorithm $A_{d,N}$ based in nodes $\bsx_1,\ldots,\bsx_N$ in $[0,1]^d$ and with non-negative weights $a_1,\ldots,a_N\ge 0$.  Let $x_{i,j}$ be the $j$-th coordinate of the point $\bsx_i$, $i \in \{1,\ldots,N\}$ and $j \in \{1,\ldots,d\}$. For $i \in \{1,\ldots,N\}$ we define functions 
$$
P_i(\bsx) := s_{x_{i,1}}(x_1) s_{x_{i,2}}(x_2) \cdots s_{x_{i,d}}(x_d),\quad \bsx=(x_1,\ldots,x_d)\in [0,1]^d.  
$$

Consider the two functions $h_d$ and $f^*:= \sum_{i=1}^N P_i$. Since $A_{d,N}$ uses only non-negative weights we have 
\begin{align*}
A_{d,N}(f^*) = & \sum_{i=1}^N a_i \sum_{j=1}^N P_j(\bsx_i) \ge \sum_{i=1}^N a_i P_i(\bsx_i)\\
= & \sum_{i=1}^N a_i s_{x_{i,1}}(x_{i,1}) s_{x_{i,2}}(x_{i,2}) \cdots s_{x_{i,d}}(x_{i,d}) =\sum_{i=1}^N a_i h_d(\bsx_i) = A_{d,N}(h_d).
\end{align*}
In the same way as in the proof of Theorem~\ref{thm2} we obtain  
\begin{align*}
e(F_{d,q},A_{d,N})  \ge & \frac{ \left(\int_{[0,1]^d} h_d(\bsx)\rd \bsx - \int_{[0,1]^d} f^*(\bsx)\rd \bsx\right)_+}{2 \max ( \Vert h_d \Vert_{d,q}, \Vert f^* \Vert_{d,q})}\\
\ge & \frac{\left(\int_{[0,1]^d} h_d(\bsx)\rd \bsx - N \beta^d\right)_+} {2 \max(\|h_d\|_{d,q},N \alpha^d)},
\end{align*}
where we used that $\Vert f^* \Vert_{d,q} \le N \alpha^d$ and $\int_{[0,1]^d} f^*(\bsx)\rd \bsx \le N  \beta^d$. This yields  
\begin{equation*}
e_q(N,d)\ge \frac{\left(\int_{[0,1]^d} h_d(\bsx)\rd \bsx - N \beta^d\right)_+} {2 \max(\|h_d\|_{d,q},N \alpha^d)}.
\end{equation*}
From here it follows in the same way as in the proof of Theorem~\ref{thm2} that $$N_q^{{\rm int}}(\varepsilon,d)\ge \widetilde{C}_p^d \, (1-2 \varepsilon) \quad \mbox{where $\widetilde{C}_p>1$ is given in \eqref{def:C1new}.}$$ 

This re-proves Theorem~\ref{thm2} (and Theorem~\ref{thm1}). The advantage of $C_p$ in Theorem~\ref{thm1} and \ref{thm2} is that it is stated explicitly for any $p \in (1,\infty)$. The value of $\widetilde{C}_p$ can be computed numerically for every $p \in (1,\infty)$. Experiments show a strong improvement of $\widetilde{C}_p$ over $C_p$. See the following table and Figure~\ref{fig_caNEU}:

$$
\begin{array}{l||l|l|l|l}
p & 2 & 3 & 4 & 5 \\
\hline 
C_p  & 1.0022\ldots & 1.00248\ldots & 1.00238\ldots & 1.0022\ldots\\
\widetilde{C}_p  &  1.06066\ldots & 1.07231 \ldots & 1.07276\ldots & 1.07005\ldots
\end{array}
$$

$$
\begin{array}{l||l|l|l|l}
p & 10 & 20 & 30 & 100 \\
\hline 
C_p & 1.00148\ldots & 1.00086\ldots & 1.0006\ldots & 1.00019\ldots\\
\widetilde{C}_p  &  1.05327\ldots & 1.035 \ldots & 1.02627\ldots & 1.0101\ldots
\end{array}
$$

Figure~\ref{fig_caNEU} shows the strong improvement of $\widetilde{C}_p$ over $C_p$. The blue line is the graph from Figure~\ref{fig_ca}. 

\begin{figure}[h]
  \begin{center}
  \includegraphics[width=10cm]{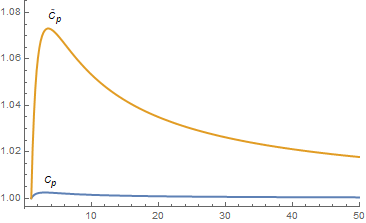}
  \caption{Plot of $\widetilde{C}_p$ compared to $C_p$ for $p \in [1,50]$. Note that $\widetilde{C}_1=C_1=1$.}
  \label{fig_caNEU}
  \end{center}
  \end{figure}

\bigskip

We end the paper with three open problems. 

\begin{enumerate} 

\item
In order to estimate the error of quadrature formulas, we only 
considered two functions $h_d$ and 
$
f^*= \sum_{i=1}^N P_i . 
$
The reason is that we have an exact formula for the norm of $P_i$, 
while the norms of ``better'' fooling functions are difficult to estimate. 
Our first Open Problem is to improve the lower bounds by finding bigger values 
for the constant $C_p$ in the main result and $\widetilde{C}_p$. 

\item 
The proof in \cite{NP23} only works for even $q$. 
However, for even $q$, it is more general since we 
prove the lower bound for all quadrature formulas, the weights $a_i$ 
do not have to be positive. 
Hence we ask, this is Open Problem 2,  whether the curse also holds 
for all $p \in (1,\infty)$ for arbitrary quadrature formulas. 
We guess that the answer is yes, but our attempts to prove it  failed. 

\item 
We already mentioned that the problem is still open for $p=1$. 
Our technique does not work in this case and we even do not guess 
an answer to  this third Open Problem.  

\end{enumerate}

\vspace{0.5cm}
\noindent{\bf Author's Address:}\\

\noindent Erich Novak, Mathematisches Institut, FSU Jena, Ernst-Abbe-Platz 2, 07740 Jena, Germany. Email: erich.novak@uni-jena.de\\

\noindent Friedrich Pillichshammer, Institut f\"{u}r Finanzmathematik und Angewandte Zahlentheorie, JKU Linz, Altenbergerstra{\ss}e 69, A-4040 Linz, Austria. Email: friedrich.pillichshammer@jku.at


\begin{thebibliography}{10}
%\bibitem{A11} C. Aistleitner: Covering numbers, dyadic chaining and discrepancy. J. Complexity 27 (6): 531--540, 2011.

\bibitem{BC} J. Beck and W.W.L. Chen: Irregularities of Distribution. Cambridge University Press, Cambridge, 1987.

\bibitem{BLV} D. Bilyk, M.T. Lacey, and A. Vagharshakyan: On the small ball inequality in all dimensions. J. Funct. Anal. 254 (9): 2470--2502, 2008. 

%\bibitem{DHPP} J. Dick, A. Hinrichs, F. Pillichshammer, and J. Prochno: Tractability properties of the discrepancy in Orlicz norms. J. Complexity 61, paper ref. 101468, 9 pp., 2020.

\bibitem{DKP} J. Dick, P. Kritzer, and F. Pillichshammer: Lattice Rules -- Numerical Integration, Approximation, and Discrepancy. Springer Series in Computational Mathematics 58, Springer, Cham, 2022.

%\bibitem{D21} B. Doerr: A sharp discrepancy bound for jittered sampling.  Math. Comp. 91(336): 1871--1892, 2022.

\bibitem{DT97} M. Drmota and R.F. Tichy: Sequences, Discrepancies and Applications. Lecture Notes in Mathematics 1651, Springer Verlag, Berlin, 1997.

%  \bibitem{GH21} M. Gnewuch and N. Hebbinghaus: Discrepancy bounds for a class of negatively 
%  dependent random points including Latin hypercube samples.  Ann. Appl. Probab. 31(4): 1944--1965, 2021. 

\bibitem{GPW} M. Gnewuch, H. Pasing, and Ch. Weiss: A generalized Faulhaber inequality, improved bracketing covers, and applications to discrepancy. Math. Comp. 90 (332): 2873--2898, 2021. 

\bibitem{hnww} S. Heinrich, E. Novak, G. Wasilkowski, and H. Wo\'{z}niakowski: The inverse of the star-discrepancy depends linearly on the dimension. Acta Arith. 96(3): 279--302, 2001.

\bibitem{Hi04} A. Hinrichs: Covering numbers, Vapnik-\v{C}ervonenkis classes and bounds for the star-discrepancy. J. Complexity 20(4): 477--483, 2004. 

%  \bibitem{HKNV21} 
%  A. Hinrichs, D. Krieg, E. Novak and J. Vyb\'\i ral: 
%  Lower bounds for the error of quadrature formulas for Hilbert spaces, 
%  J. Complexity 65, 101544, 2021. 


%  \bibitem{HKNV22} 
%  A. Hinrichs, D. Krieg, E. Novak and J. Vyb\'\i ral: 
%  Lower bounds for integration and recovery in $L_2$, 
%  J. Complexity 72, 101662, 2022.   

\bibitem{Kr24}
D. Krieg: 
Tractability of sampling recovery on unweighted function classes, 
arxiv 2304.14169.


%  \bibitem{KV23} 
%  D. Krieg and J. Vyb\'\i ral: 
%  New lower bounds for the integration of periodic functions. 
%  Preprint arXiv 2302.02639. 

\bibitem{kuinie} L. Kuipers and H. Niederreiter: Uniform Distribution of Sequences. John Wiley, New York, 1974.


\bibitem{mat} J. Matou\v{s}ek: Geometric Discrepancy -- An Illustrated Guide, Algorithms and Combinatorics, 18,  Springer-Verlag, Berlin, 1999.

%  \bibitem{NoX} E. Novak:
%  Intractability results for positive quadrature formulas and extremal 
%  problems for trigonometric polynomials. 
%  J. Complexity 15, 299--316, 1999.

\bibitem{No24}
E. Novak:
Optimal algorithms for numerical integration: 
recent results and open problems. 
To appear in: A. Hinrichs, P. Kritzer, F. Pillichshammer (eds.).
Monte Carlo and Quasi-Monte Carlo Methods 2022. Springer Verlag,
2024. 
 

\bibitem{NP23} E. Novak and F. Pillichshammer: The curse of dimensionality 
for the $L_p$-discrepancy with finite $p$. J. Complexity 79: paper ref. 101769, 19 pp., 2023.


%  \bibitem{NSW97} E. Novak, I.H. Sloan and H. Wo\'zniakowski: 
%  Tractability of tensor product linear operators, 
%  J. Complexity 13: 387--418, 1997. 

\bibitem{NW01} E. Novak and H. Wo\'{z}niakowski: Intractability results for integration and discrepancy. J. Complexity 17(2):  388--441, 2001.

%  \bibitem{NW08} E. Novak and H. Wo\'{z}niakowski: Tractability of Multivariate Problems. 
%  Volume I: Linear Information. EMS Tracts in Mathematics 16, Z\"urich, 2008.

\bibitem{NW10} E. Novak and H. Wo\'{z}niakowski: Tractability of Multivariate Problems. Volume II: Standard Information for Functionals. EMS Tracts in Mathematics 12, Z\"urich, 2010.

%  \bibitem{PW20} H. Pasing and C. Weiss: Improving a constant in high-dimensional 
%  discrepancy estimates. Publ. Inst. Math. (Beograd) (N.S.) 107(121): 67--74, 2020.

%\bibitem{P22} F. Pillichshammer: The BMO-discrepancy suffers from the curse of dimensionality. J. Complexity, in press, 2023.

\bibitem{Roth} K.F. Roth: On irregularities of distribution. Mathematika 1: 73--79, 1954.

%  \bibitem{SW97}  I.H. Sloan and H. Wo\'{z}niakowski: 
%  An intractability result for multiple integration. 
%  Math. Comp. 66, 1119--1124, 1997.  

%  \bibitem{SW1998}  I.H. Sloan and H. Wo\'{z}niakowski: When are quasi-Monte 
%  Carlo algorithms efficient for high-dimensional integrals? J. Complexity 14: 1--33, 1998.

%  \bibitem{Vy20}  
%  J. Vyb\'\i ral: 
%  A variant of Schur's product theorem and its applications. 
%  Adv. Math. 368, 107140, 2020. 

\bibitem{Wo99} H. Wo\'{z}niakowski: Efficiency of quasi-Monte Carlo algorithms for high dimensional integrals. In: Monte Carlo and Quasi-Monte Carlo Methods 1998 (H. Niederreiter and J. Spanier, eds.), Springer Verlag, Berlin, 1999.
\end{thebibliography}
\end{document}